
\documentclass[reqno]{amsart}
\usepackage[psamsfonts]{amssymb}
\usepackage{amsmath, amsthm, amscd, amsfonts}
\usepackage{amsmath}
\usepackage{amssymb}
\usepackage{enumerate}
\usepackage{geometry}

\setcounter{MaxMatrixCols}{10}

\def\bege{\begin{equation}}
\def\ende{\end{equation}}

\def\begr{\begin{eqnarray}}
\def\endr{\end{eqnarray}}

\def\bege{\begin{equation}}
\def\ende{\end{equation}}
\def\begr{\begin{eqnarray}}
\def\endr{\end{eqnarray}}
\def\bnum{\begin{enumerate}}
\def\enum{\end{enumerate}}
\newtheorem{theorem}{Theorem}

\newtheorem{definition}[theorem]{Definition}

\allowdisplaybreaks[4]

\input{tcilatex}

\begin{document}
\title[Periodic solutions of a system of difference equations]{{\large 
\textbf{Periodic solutions of a system of nonlinear difference equations
with periodic coefficients}}}
\author[Durhasan Turgut Tollu]{Durhasan Turgut Tollu}
\address{Necmettin Erbakan University, Faculty of Science, Department of
Mathematics-Computer, Konya, Turkey}
\email{dttollu@erbakan.edu.tr}
\keywords{General solution in closed form, periodic solution, systems of
difference equations.}
\subjclass[2000]{Primary 39A10, 39A20, 39A23.}
\maketitle

\begin{abstract}
In this paper it is dealt with the following system of difference equations 
\begin{equation*}
x_{n+1}=\frac{a_{n}}{x_{n}}+\frac{b_{n}}{y_{n}},\ y_{n+1}=\frac{c_{n}}{x_{n}}%
+\frac{d_{n}}{y_{n}},\ n\in \mathbb{N}_{0},
\end{equation*}%
where the initial values $x_{0},y_{0}$ are positive real numbers and the
coefficients $\left( a_{n}\right) _{n\geq 0}$, $\left( b_{n}\right) _{n\geq
0}$, $\left( c_{n}\right) _{n\geq 0}$, $\left( d_{n}\right) _{n\geq 0}$ are
two-periodic sequences with positive terms. The system is an extention of a
system that every positive solution is two-periodic or converges to its a
two-periodic solution. Here, the long-term behavior of posistive solutions
of the system is examined by using a new method to solve the system.
\end{abstract}


\section{Introduction}

Studying concrete nonlinear difference equations and systems have attracted
a great recent interest. Particularly, there have been a renewed interest in
solvable nonlinear difference equations and systems for fifteen years (see,
e.g., \cite{Cinar}-\cite{Grove0}, \cite{Haddad},\cite{KaraTJM}, \cite%
{McGrath}-\cite{S8},\cite{T2}-\cite{YY6} and the related references
therein). Solvable difference equations are not interesting for themselves
only, but they can be also applied in other areas of mathematics, as well as
other areas of science (see, e.g., \cite{Grove},\cite{ll}).

One of the first examples of solvable nonlinear difference equations is
presented in note \cite{Brand} where Brand solves the nonlinear difference
equation%
\begin{equation}
x_{n+1}=\frac{ax_{n}+b}{cx_{n}+d},\ n\in \mathbb{N}_{0},  \label{b}
\end{equation}%
where the initial value $x_{0}$ is a real number and the parameters $a$, $b$%
, $c$, $d$ are real numbers with the restrictions $c\neq 0$, $ad-bc\neq 0$,
and studies long-term behavior of solutions the equation. The note presents
a transformation which transforms the nonlinear equation into a linear one.
The idea has been used many times in showing solvability of some difference
equations, as well as of some systems of difference equations (see, e.g., 
\cite{Haddad},\cite{KaraTJM},\cite{McGrath},\cite{HS},\cite{T2}-\cite{Tollu},%
\cite{YY6}). Another example of solvable nonlinear difference equations is
the following system of nonlinear difference equation%
\begin{equation}
x_{n+1}=\frac{a}{x_{n}}+\frac{b}{y_{n}},\ y_{n+1}=\frac{c}{x_{n}}+\frac{d}{%
y_{n}},\ n\in \mathbb{N}_{0},  \label{b1}
\end{equation}%
where the initial value $x_{0}$, $y_{0}$ are positive real numbers and the
parameters $a$, $b$, $c$, $d$ are positive real numbers. System (\ref{b1})
can be transformed into an equation of form (\ref{b}) dividing the first
equation of (\ref{b1}) by its second one. So, the results on Eq. (\ref{b})
can be used to obtained the results on system (\ref{b1}). System (\ref{b1})
was studied for the first time in \cite{Grove0} by using the method
described above. Also, in \cite{Grove0}, it is shown that every positive
solution of system (\ref{b1}) is two-periodic or converges to its a
two-periodic solution. For more results on system (\ref{b1}), see \cite%
{Grove},\cite{Stevic-amc9223},\cite{S8}.

System (\ref{b1}) can be extended by interchanging the constant coefficients 
$a$, $b$, $c$, $d$ with two-periodic ones. More concretely, another
extension with two-periodic coefficients of (\ref{b1}) is the following
system of difference equations%
\begin{equation}
x_{n+1}=\frac{a_{n}}{x_{n}}+\frac{b_{n}}{y_{n}},\ y_{n+1}=\frac{c_{n}}{x_{n}}%
+\frac{d_{n}}{y_{n}},\ n\in \mathbb{N}_{0},  \label{sys}
\end{equation}%
where the initial values $x_{0},y_{0}$ are positive real numbers and $\left(
a_{n}\right) _{n\geq 0}$, $\left( b_{n}\right) _{n\geq 0}$, $\left(
c_{n}\right) _{n\geq 0}$, $\left( d_{n}\right) _{n\geq 0}$ are two-periodic
sequences of positive real numbers. For extensions with periodic
coefficients of some difference equations and systems, see \cite{Cinar},\cite%
{NT},\cite{Yalc}.

Our main purpose in this paper is to determine the long-term behavior of
posistive solutions of system (\ref{sys}). We also use a new method to solve
the system without needing some other nonlinear difference equations such as
(\ref{b}). Throughout this paper we assume that $a_{2n}=a_{0}$, $%
a_{2n+1}=a_{1}$, $b_{2n}=b_{0}$, $b_{2n+1}=b_{1}$, $c_{2n}=c_{0}$, $%
c_{2n+1}=c_{1}$ and $d_{2n}=d_{0}$, $d_{2n+1}=d_{1}$ with $a_{0}\neq a_{1}$, 
$b_{0}\neq b_{1}$, $c_{0}\neq c_{1}$ and $d_{0}\neq d_{1}$. We also adopt
the assumptions%
\begin{equation*}
\sum\limits_{k=m}^{m-l}s_{k}=0\text{ and }\prod\limits_{k=m}^{m-l}s_{k}=1,\
l\in \mathbb{N}.
\end{equation*}

\begin{definition}
A solution $\left( x_{n},y_{n}\right) _{n\geq 0}$ of the system%
\begin{equation*}
x_{n+1}=f\left( x_{n},y_{n}\right) ,\ y_{n+1}=g\left( x_{n},y_{n}\right) ,\
n\in \mathbb{N}_{0},
\end{equation*}
is eventually periodic with period $p$, if there is an $n_{1}>0$ such that $%
\left( x_{n+p},y_{n+p}\right) =\left( x_{n},y_{n}\right) $, then for $n\geq
n_{1}$. If $n_{1}=0$, then the solution is periodic with period $p$.
\end{definition}

The folowing result is extracted from \cite{Knopp}.

\begin{theorem}
\label{T}A product $\prod\limits_{k=0}^{\infty }\left( 1+\alpha _{k}\right) $
with positive terms $\alpha _{k}$ is convergent if and only if $%
\sum\limits_{k=0}^{\infty }\alpha _{k}$ converges.
\end{theorem}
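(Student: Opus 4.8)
The plan is to prove the equivalence by trapping the partial products between the partial sums and their exponential. Write $P_{n}=\prod_{k=0}^{n}\left( 1+\alpha _{k}\right) $ and $S_{n}=\sum_{k=0}^{n}\alpha _{k}$. Since each $\alpha _{k}$ is positive, every factor $1+\alpha _{k}$ exceeds $1$, so $\left( P_{n}\right) $ is strictly increasing, and $\left( S_{n}\right) $ is strictly increasing as well; hence each of these sequences converges if and only if it is bounded above. Observe also that if $\left( P_{n}\right) $ converges, its limit is at least $1+\alpha _{0}>0$, so the limit is automatically nonzero and the product is convergent in the proper sense; there is no degenerate case to handle.

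The core step is the two-sided estimate
\[
1+S_{n}\ \leq \ P_{n}\ \leq \ e^{S_{n}},\qquad n\in \mathbb{N}_{0}.
\]
The left inequality follows by expanding $\prod_{k=0}^{n}\left( 1+\alpha_{k}\right) $: because all $\alpha _{k}$ are positive, every term of the expansion is nonnegative, and keeping only the constant term $1$ together with the $n+1$ linear terms $\alpha _{0},\dots ,\alpha _{n}$ already yields $1+S_{n}$. The right inequality follows from the elementary bound $1+x\leq e^{x}$, valid for all real $x$, applied factorwise: $P_{n}=\prod_{k=0}^{n}\left( 1+\alpha _{k}\right) \leq \prod_{k=0}^{n}e^{\alpha _{k}}=e^{S_{n}}$.

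Finishing is then immediate. If $\sum_{k=0}^{\infty }\alpha _{k}$ converges, then $\left( S_{n}\right) $ is bounded, so by the right inequality $\left( P_{n}\right) $ is bounded above; being increasing it converges to a nonzero limit, i.e. the product converges. Conversely, if the product converges, then $\left( P_{n}\right) $ is bounded above, and the left inequality forces $\left( S_{n}\right) $ to be bounded above; being increasing, $\left( S_{n}\right) $ converges, that is $\sum_{k=0}^{\infty }\alpha _{k}<\infty $. There is no real obstacle in this argument; the only point needing a moment's care is verifying that the limit of $\left( P_{n}\right) $ is nonzero, which is built in since all factors exceed $1$.
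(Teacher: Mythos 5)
Your proof is correct and complete: the sandwich $1+S_{n}\leq P_{n}\leq e^{S_{n}}$ together with monotonicity of both sequences settles both directions, and the observation that the limit of $P_{n}$ is at least $1+\alpha_{0}>1$ properly rules out the degenerate case where a product ``converges'' to zero. The paper itself gives no proof of this statement, merely citing Knopp's book; your argument is essentially the standard one found there, so nothing is missing.
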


\section{Main Results}

In this section we formulate and prove our main results.

\begin{theorem}
Assume that $x_{0},y_{0}>0$ and $\left( a_{n}\right) _{n\geq 0}$, $\left(
b_{n}\right) _{n\geq 0}$, $\left( c_{n}\right) _{n\geq 0}$, $\left(
d_{n}\right) _{n\geq 0}$ are two-periodic sequences of positive real
numbers. Then, system of difference equations (\ref{sys}) can be solved in
closed form.
\end{theorem}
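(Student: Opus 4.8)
The plan is to reduce the two-periodic system to constant-coefficient subsystems along even and odd indices, and then solve those by the divide-and-linearize trick that works for \eqref{b1}. First I would split a solution $(x_n,y_n)_{n\geq 0}$ into its even-indexed and odd-indexed parts. Writing out \eqref{sys} for two consecutive steps and using $a_{2n}=a_0$, $a_{2n+1}=a_1$, etc., one obtains, for $n\geq 0$,
\begin{equation*}
x_{2n+2}=\frac{a_1}{x_{2n+1}}+\frac{b_1}{y_{2n+1}},\qquad x_{2n+1}=\frac{a_0}{x_{2n}}+\frac{b_0}{y_{2n}},
\end{equation*}
and the analogous pair for $y$. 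Substituting the second relation (and its $y$-counterpart) into the first expresses $(x_{2n+2},y_{2n+2})$ directly in terms of $(x_{2n},y_{2n})$; the same substitution starting from an odd index expresses $(x_{2n+3},y_{2n+3})$ in terms of $(x_{2n+1},y_{2n+1})$. So it suffices to solve, in closed form, a first-order system of the shape
\begin{equation*}
u_{n+1}=\frac{A}{\,\dfrac{\alpha}{u_n}+\dfrac{\beta}{v_n}\,}+\frac{B}{\,\dfrac{\gamma}{u_n}+\dfrac{\delta}{v_n}\,},\qquad v_{n+1}=\frac{C}{\,\dfrac{\alpha}{u_n}+\dfrac{\beta}{v_n}\,}+\frac{D}{\,\dfrac{\gamma}{u_n}+\dfrac{\delta}{v_n}\,},
\end{equation*}
with $A,\dots,\delta$ the appropriate products/sums of the $a_i,b_i,c_i,d_i$.

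Next I would introduce the ratio $t_n=x_n/y_n$, which is the heart of the "new method" hinted at in the introduction. Dividing the first equation of \eqref{sys} by the second gives
\begin{equation*}
t_{n+1}=\frac{x_{n+1}}{y_{n+1}}=\frac{\dfrac{a_n}{x_n}+\dfrac{b_n}{y_n}}{\dfrac{c_n}{x_n}+\dfrac{d_n}{y_n}}=\frac{a_n t_n^{-1}+b_n t_n^{-1}\cdot(x_n/y_n)\cdot\ ...}{\ }
\end{equation*}
— more cleanly, multiplying numerator and denominator by $y_n/x_n=t_n^{-1}$ is not needed; instead multiply both by $x_n y_n$ is also not needed. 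The clean identity is
\begin{equation*}
t_{n+1}=\frac{a_n y_n+b_n x_n}{c_n y_n+d_n x_n}=\frac{b_n t_n+a_n}{d_n t_n+c_n},
\end{equation*}
so $t_n$ satisfies a Riccati-type recursion with two-periodic coefficients. Iterating over one period gives $t_{n+2}$ as a Möbius function of $t_n$ with \emph{constant} coefficients (the composition of the even-step and odd-step Möbius maps), and a constant-coefficient Riccati equation is solvable in closed form by the standard substitution $t_n=p_n/q_n$ that linearizes it (equivalently, by conjugating the associated $2\times2$ matrix to Jordan form and taking powers). Once $(t_{2n})$ and $(t_{2n+1})$ are known explicitly, I recover $x_n$ alone: from the first equation of \eqref{sys}, $x_{n+1}=\dfrac{a_n}{x_n}+\dfrac{b_n}{y_n}=\dfrac{1}{x_n}\bigl(a_n+b_n t_n\bigr)$, hence
\begin{equation*}
x_{n+1}x_n=a_n+b_n t_n,
\end{equation*}
a first-order recursion in the product $x_{n+1}x_n$ with a known right-hand side; telescoping the even and odd subsequences of $x_n x_{n+1}$ (again two-periodic, so constant after pairing) yields $x_{2n}$ and $x_{2n+1}$ in closed form, and then $y_n=x_n/t_n$ finishes it.

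The main obstacle I expect is purely bookkeeping rather than conceptual: one must track the Möbius/matrix composition over a period carefully and split into the generic (diagonalizable, two distinct eigenvalues) case versus the degenerate (repeated eigenvalue) case, since the closed form differs — geometric-type terms $\mu^n$ in the former, terms of the form $(\text{const}_1+\text{const}_2 n)\mu^n$ in the latter. One should also record the nondegeneracy needed ($d_n t_n+c_n\neq 0$, which holds automatically here since all coefficients and all $x_n,y_n$ are positive, so $t_n>0$), and keep the formulas for $x_{2n},x_{2n+1},y_{2n},y_{2n+1}$ as four separate explicit expressions. With the positivity hypotheses in force, no division-by-zero pathology arises, so the reduction goes through cleanly and produces the claimed closed-form solution; the later theorems on long-term behavior (periodicity or convergence to a two-periodic solution) will then follow by inspecting the exponents $\mu^n$ in these formulas, invoking Theorem~\ref{T} where an infinite product needs to be controlled.
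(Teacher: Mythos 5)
Your proposal is correct in substance but follows a genuinely different route from the paper. You divide the two equations of (\ref{sys}) to obtain the Riccati-type recursion $t_{n+1}=(b_{n}t_{n}+a_{n})/(d_{n}t_{n}+c_{n})$ for the ratio $t_{n}=x_{n}/y_{n}$, compose the two M\"obius maps over one period into a constant-coefficient M\"obius map, linearize it via matrix powers, and then recover $x_{n}$ from the telescoping identity $x_{n+1}x_{n}=a_{n}+b_{n}t_{n}$ and $y_{n}=x_{n}/t_{n}$; this is the classical reduction to an equation of form (\ref{b}), adapted to periodic coefficients, and it does yield a closed form. The paper deliberately avoids that reduction (its advertised ``new method''): it multiplies both equations of (\ref{sys}) by $\prod_{k=0}^{n}x_{k}y_{k}$, which converts the system in one stroke into the first-order \emph{linear} vector recursion $(u_{n+1},v_{n+1})^{T}=A_{n}(u_{n},v_{n})^{T}$ for the products $u_{n}=\prod_{k=0}^{n}x_{k}\prod_{k=0}^{n-1}y_{k}$ and $v_{n}=\prod_{k=0}^{n-1}x_{k}\prod_{k=0}^{n}y_{k}$, and then recovers $x_{n},y_{n}$ as ratios of consecutive $u$'s and $v$'s. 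The two routes are closely related: your M\"obius matrix (the $2\times2$ matrix with rows $(b_{n},a_{n})$ and $(d_{n},c_{n})$) is exactly the paper's $A_{n}$, and $u_{n}/v_{n}=t_{n}$, so the same eigenvalue analysis of the one-period product $A_{1}A_{0}$ drives both arguments. Your approach connects directly to the known treatment of (\ref{b1}); the paper's produces the telescoping products needed for the asymptotics in a single linearization step. One simplification you can make: the degenerate repeated-eigenvalue case you set aside never occurs here, since the one-period matrix has positive entries and its characteristic discriminant equals $\left(a_{1}d_{0}+b_{0}b_{1}-a_{0}d_{1}-c_{0}c_{1}\right)^{2}+4\left(a_{0}b_{1}+a_{1}c_{0}\right)\left(b_{0}d_{1}+c_{1}d_{0}\right)>0$, so the eigenvalues are always real and distinct.
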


\begin{proof}
First, it is easy to show by induction that $x_{n}$, $y_{n}>0$, for all $%
n\in N_{0}$. Multiplying both equations in (\ref{sys}) by the following
positive product%
\begin{equation*}
\prod\limits_{k=0}^{n}x_{k}y_{k},
\end{equation*}%
we obtain 
\begin{equation}
\prod\limits_{k=0}^{n+1}x_{k}\prod\limits_{k=0}^{n}y_{k}=a_{n}\prod%
\limits_{k=0}^{n-1}x_{k}\prod\limits_{k=0}^{n}y_{k}+b_{n}\prod%
\limits_{k=0}^{n}x_{k}\prod\limits_{k=0}^{n-1}y_{k}  \label{psx}
\end{equation}%
and%
\begin{equation}
\prod\limits_{k=0}^{n}x_{k}\prod\limits_{k=0}^{n+1}y_{k}=c_{n}\prod%
\limits_{k=0}^{n-1}x_{k}\prod\limits_{k=0}^{n}y_{k}+d_{n}\prod%
\limits_{k=0}^{n}x_{k}\prod\limits_{k=0}^{n-1}y_{k}  \label{psy}
\end{equation}%
for all $n\in \mathbb{N}_{0}$. Note that the equalities (\ref{psx})-(\ref%
{psy}) constitute a linear system with respect to the following products 
\begin{equation}
u_{n}=\prod\limits_{k=0}^{n}x_{k}\prod\limits_{k=0}^{n-1}y_{k}\text{ and }%
v_{n}=\prod\limits_{k=0}^{n-1}x_{k}\prod\limits_{k=0}^{n}y_{k}.  \label{cv}
\end{equation}%
Therefore, we can write this system in the vector form%
\begin{equation}
\left( 
\begin{array}{c}
u_{n+1} \\ 
v_{n+1}%
\end{array}%
\right) =\left( 
\begin{array}{cc}
b_{n} & a_{n} \\ 
d_{n} & c_{n}%
\end{array}%
\right) \left( 
\begin{array}{c}
u_{n} \\ 
v_{n}%
\end{array}%
\right) ,  \label{veksys}
\end{equation}%
where $u_{0}=x_{0}$, $v_{0}=y_{0}$, which is simplier, for all $n\in \mathbb{%
N}_{0}$. Let%
\begin{equation*}
A_{n}=\left( 
\begin{array}{cc}
b_{n} & a_{n} \\ 
d_{n} & c_{n}%
\end{array}%
\right) \text{.}
\end{equation*}%
Then, since the sequences $\left( a_{n}\right) _{n\geq 0}$, $\left(
b_{n}\right) _{n\geq 0}$, $\left( c_{n}\right) _{n\geq 0}$, $\left(
d_{n}\right) _{n\geq 0}$ are two-periodic, the matrix $A_{n}$ becomes%
\begin{equation*}
A_{n}=\left\{ 
\begin{array}{c}
A_{0}=\left( 
\begin{array}{cc}
b_{0} & a_{0} \\ 
d_{0} & c_{0}%
\end{array}%
\right) ,\ \text{if }n\text{ is even,} \\ 
A_{1}=\left( 
\begin{array}{cc}
b_{1} & a_{1} \\ 
d_{1} & c_{1}%
\end{array}%
\right) ,\ \text{if }n\text{ is odd.}%
\end{array}%
\right. 
\end{equation*}%
Now we decompose (\ref{veksys}) with respect to even-subscript and
odd-subscript terms as follows: 
\begin{eqnarray}
\left( 
\begin{array}{c}
u_{2n+1} \\ 
v_{2n+1}%
\end{array}%
\right)  &=&\left( 
\begin{array}{cc}
b_{0} & a_{0} \\ 
d_{0} & c_{0}%
\end{array}%
\right) \left( 
\begin{array}{c}
u_{2n} \\ 
v_{2n}%
\end{array}%
\right) ,  \label{odd} \\
\left( 
\begin{array}{c}
u_{2n+2} \\ 
v_{2n+2}%
\end{array}%
\right)  &=&\left( 
\begin{array}{cc}
b_{1} & a_{1} \\ 
d_{1} & c_{1}%
\end{array}%
\right) \left( 
\begin{array}{c}
u_{2n+1} \\ 
v_{2n+1}%
\end{array}%
\right)   \label{even}
\end{eqnarray}%
for all $n\in \mathbb{N}_{0}$. From which (\ref{odd}) and (\ref{even})
follows that%
\begin{equation*}
\left( 
\begin{array}{c}
u_{2n+2} \\ 
v_{2n+2}%
\end{array}%
\right) =\left( 
\begin{array}{cc}
b_{1} & a_{1} \\ 
d_{1} & c_{1}%
\end{array}%
\right) \left( 
\begin{array}{cc}
b_{0} & a_{0} \\ 
d_{0} & c_{0}%
\end{array}%
\right) \left( 
\begin{array}{c}
u_{2n} \\ 
v_{2n}%
\end{array}%
\right) 
\end{equation*}%
or%
\begin{equation}
\left( 
\begin{array}{c}
u_{2n+2} \\ 
v_{2n+2}%
\end{array}%
\right) =\left( 
\begin{array}{cc}
a_{1}d_{0}+b_{0}b_{1} & a_{0}b_{1}+a_{1}c_{0} \\ 
b_{0}d_{1}+c_{1}d_{0} & a_{0}d_{1}+c_{0}c_{1}%
\end{array}%
\right) \left( 
\begin{array}{c}
u_{2n} \\ 
v_{2n}%
\end{array}%
\right) .  \label{sysu2n1}
\end{equation}%
Let $A_{0}A_{1}=A$. Then, we consider two cases of the matrix $A$ as the
following:

Case 1: $rank\left( A\right) =1$. In this case the first row in the matrix $%
A $ are linearly dependent on the second one. Without loss of generality we
may assume that%
\begin{equation}
\left( b_{0}d_{1}+c_{1}d_{0},a_{0}d_{1}+c_{0}c_{1}\right) =K\left(
a_{1}d_{0}+b_{0}b_{1},a_{0}b_{1}+a_{1}c_{0}\right) ,  \label{LB}
\end{equation}%
where $K$ is a positive constant such that%
\begin{equation*}
K=\frac{b_{0}d_{1}+c_{1}d_{0}}{a_{1}d_{0}+b_{0}b_{1}}=\frac{%
a_{0}d_{1}+c_{0}c_{1}}{a_{0}b_{1}+a_{1}c_{0}}.
\end{equation*}%
Using (\ref{LB}) in system (\ref{sysu2n1}) we have%
\begin{eqnarray*}
u_{2n+2} &=&\left( a_{1}d_{0}+b_{0}b_{1}\right) u_{2n}+\left(
a_{0}b_{1}+a_{1}c_{0}\right) v_{2n}, \\
v_{2n+2} &=&K\left( \left( a_{1}d_{0}+b_{0}b_{1}\right) u_{2n}+\left(
a_{0}b_{1}+a_{1}c_{0}\right) v_{2n}\right)
\end{eqnarray*}%
which implies the relation 
\begin{equation*}
v_{2n+2}=Ku_{2n+2}
\end{equation*}%
for all $n\in \mathbb{N}_{0}$. By the last three relations we have 
\begin{equation*}
u_{2n+2}=\left( a_{1}d_{0}+b_{0}b_{1}+K\left( a_{0}b_{1}+a_{1}c_{0}\right)
\right) u_{2n}
\end{equation*}%
from which it follows that%
\begin{eqnarray}
u_{2n} &=&\left( a_{1}d_{0}+b_{0}b_{1}+K\left( a_{0}b_{1}+a_{1}c_{0}\right)
\right) ^{n-1}u_{2},  \label{u0} \\
v_{2n} &=&K\left( a_{1}d_{0}+b_{0}b_{1}+K\left( a_{0}b_{1}+a_{1}c_{0}\right)
\right) ^{n-1}u_{2}  \label{v0}
\end{eqnarray}%
for all $n\in \mathbb{N}$. Using (\ref{u0}) and (\ref{v0}) in (\ref{odd}) we
obtain 
\begin{eqnarray}
u_{2n+1} &=&\left( b_{0}+Ka_{0}\right) \left( a_{1}d_{0}+b_{0}b_{1}+K\left(
a_{0}b_{1}+a_{1}c_{0}\right) \right) ^{n-1}u_{2}  \label{u1} \\
v_{2n+1} &=&\left( d_{0}+Kc_{0}\right) \left( a_{1}d_{0}+b_{0}b_{1}+K\left(
a_{0}b_{1}+a_{1}c_{0}\right) \right) ^{n-1}u_{2}  \label{v1}
\end{eqnarray}%
for all $n\in \mathbb{N}$. Also, the changes of variables in (\ref{cv}) yield%
\begin{eqnarray}
x_{n+1} &=&\frac{u_{n+1}v_{n-1}}{u_{n}v_{n}}x_{n-1},  \label{xuv} \\
y_{n+1} &=&\frac{v_{n+1}u_{n-1}}{v_{n}u_{n}}x_{n-1}  \label{yuv}
\end{eqnarray}%
for all $n\in \mathbb{N}$. Hence, from (\ref{xuv}) and (\ref{yuv}), we obtain%
\begin{eqnarray}
x_{2n} &=&x_{0}\prod\limits_{k=1}^{n}\frac{u_{2k}v_{2k-2}}{u_{2k-1}v_{2k-1}}
\label{h1} \\
x_{2n+1} &=&x_{1}\prod\limits_{k=1}^{n}\frac{u_{2k+1}v_{2k-1}}{u_{2k}v_{2k}}
\label{h2}
\end{eqnarray}%
and%
\begin{eqnarray}
y_{2n} &=&y_{0}\prod\limits_{k=0}^{n}\frac{v_{2k}u_{2k-2}}{v_{2k-1}u_{2k-1}},
\label{h3} \\
y_{2n+1} &=&y_{1}\prod\limits_{k=1}^{n}\frac{v_{2k+1}u_{2k-1}}{v_{2k}u_{2k}},
\label{h4}
\end{eqnarray}%
respectively. By employing (\ref{odd}) in (\ref{h1})-(\ref{h4}), we have the
following closed formulas 
\begin{eqnarray}
x_{2n} &=&x_{0}\prod\limits_{k=1}^{n}\frac{u_{2k}v_{2k-2}}{\left(
b_{0}u_{2k-2}+a_{0}v_{2k-2}\right) \left( d_{0}u_{2k-2}+c_{0}v_{2k-2}\right) 
},  \label{px2n} \\
x_{2n+1} &=&x_{1}\prod\limits_{k=1}^{n}\frac{\left(
b_{0}u_{2k}+a_{0}v_{2k}\right) \left( d_{0}u_{2k-2}+c_{0}v_{2k-2}\right) }{%
u_{2k}v_{2k}},  \label{px2n+1} \\
y_{2n} &=&y_{0}\prod\limits_{k=1}^{n}\frac{v_{2k}u_{2k-2}}{\left(
d_{0}u_{2k-2}+c_{0}v_{2k-2}\right) \left( b_{0}u_{2k-2}+a_{0}v_{2k-2}\right) 
},  \label{py2n} \\
y_{2n+1} &=&y_{1}\prod\limits_{k=1}^{n}\frac{\left(
d_{0}u_{2k}+c_{0}v_{2k}\right) \left( b_{0}u_{2k-2}+a_{0}v_{2k-2}\right) }{%
v_{2k}u_{2k}},  \label{py2n+1}
\end{eqnarray}%
which is valid for all $n\in \mathbb{N}_{0}$, respectively. Consequently, in
the case $rank\left( A\right) =1$, by using the formulas (\ref{u0})-(\ref{v1}%
) in (\ref{px2n})-(\ref{py2n+1}), we have the general solution of (\ref{sys}%
) as follows: 
\begin{eqnarray}
x_{2n} &=&x_{0}\prod\limits_{k=1}^{n}\frac{K\left(
a_{1}d_{0}+b_{0}b_{1}+K\left( a_{0}b_{1}+a_{1}c_{0}\right) \right) }{\left(
b_{0}+Ka_{0}\right) \left( d_{0}+Kc_{0}\right) }=x_{0}\left( \frac{K\left(
a_{1}d_{0}+b_{0}b_{1}+K\left( a_{0}b_{1}+a_{1}c_{0}\right) \right) }{\left(
b_{0}+Ka_{0}\right) \left( d_{0}+Kc_{0}\right) }\right) ^{n},  \label{1x2n}
\\
x_{2n+1} &=&x_{1}\prod\limits_{k=1}^{n}\frac{\left( b_{0}+Ka_{0}\right)
\left( d_{0}+Kc_{0}\right) }{K\left( a_{1}d_{0}+b_{0}b_{1}+K\left(
a_{0}b_{1}+a_{1}c_{0}\right) \right) }=x_{1}\left( \frac{\left(
b_{0}+Ka_{0}\right) \left( d_{0}+Kc_{0}\right) }{K\left(
a_{1}d_{0}+b_{0}b_{1}+K\left( a_{0}b_{1}+a_{1}c_{0}\right) \right) }\right)
^{n},  \label{1x2n+1} \\
y_{2n} &=&y_{0}\prod\limits_{k=1}^{n}\frac{K\left(
a_{1}d_{0}+b_{0}b_{1}+K\left( a_{0}b_{1}+a_{1}c_{0}\right) \right) }{\left(
d_{0}+Kc_{0}\right) \left( b_{0}+Ka_{0}\right) }=y_{0}\left( \frac{K\left(
a_{1}d_{0}+b_{0}b_{1}+K\left( a_{0}b_{1}+a_{1}c_{0}\right) \right) }{\left(
d_{0}+Kc_{0}\right) \left( b_{0}+Ka_{0}\right) }\right) ^{n},  \label{1y2n}
\\
y_{2n+1} &=&y_{1}\prod\limits_{k=1}^{n}\frac{\left( d_{0}+Kc_{0}\right)
\left( b_{0}+Ka_{0}\right) }{K\left( a_{1}d_{0}+b_{0}b_{1}+K\left(
a_{0}b_{1}+a_{1}c_{0}\right) \right) }=y_{1}\left( \frac{\left(
d_{0}+Kc_{0}\right) \left( b_{0}+Ka_{0}\right) }{K\left(
a_{1}d_{0}+b_{0}b_{1}+K\left( a_{0}b_{1}+a_{1}c_{0}\right) \right) }\right)
^{n},  \label{1y2n+1}
\end{eqnarray}%
where%
\begin{equation}
K=\frac{b_{0}d_{1}+c_{1}d_{0}}{a_{1}d_{0}+b_{0}b_{1}}=\frac{%
a_{0}d_{1}+c_{0}c_{1}}{a_{0}b_{1}+a_{1}c_{0}}.  \label{k}
\end{equation}%
for all $n\in \mathbb{N}_{0}$, respectively.

Case 2: $rank\left( A\right) =2$. In this case both rows in the matrix $A$
are linearly independent of each other. This case also implies that $A$ has
two different eigenvalues given by%
\begin{equation}
\lambda _{1}=\frac{a_{1}d_{0}+b_{0}b_{1}+a_{0}d_{1}+c_{0}c_{1}+\sqrt{\left(
a_{1}d_{0}+b_{0}b_{1}-a_{0}d_{1}-c_{0}c_{1}\right) ^{2}+4\left(
a_{0}b_{1}+a_{1}c_{0}\right) \left( b_{0}d_{1}+c_{1}d_{0}\right) }}{2}
\label{r1}
\end{equation}%
and 
\begin{equation}
\lambda _{2}=\frac{a_{1}d_{0}+b_{0}b_{1}+a_{0}d_{1}+c_{0}c_{1}-\sqrt{\left(
a_{1}d_{0}+b_{0}b_{1}-a_{0}d_{1}-c_{0}c_{1}\right) ^{2}+4\left(
a_{0}b_{1}+a_{1}c_{0}\right) \left( b_{0}d_{1}+c_{1}d_{0}\right) }}{2}.
\label{r2}
\end{equation}

Since these eigenvalues will correspond to two linear independent
eigenvectors, we may write the matrix $A$ as follows:%
\begin{equation*}
A=P\Lambda P^{-1}
\end{equation*}%
where%
\begin{equation*}
P=\left( 
\begin{array}{cc}
\frac{a_{0}b_{1}+a_{1}c_{0}}{\lambda _{1}-\left(
a_{1}d_{0}+b_{0}b_{1}\right) } & \frac{a_{0}b_{1}+a_{1}c_{0}}{\lambda
_{2}-\left( a_{1}d_{0}+b_{0}b_{1}\right) } \\ 
1 & 1%
\end{array}%
\right) ,
\end{equation*}%
\begin{equation*}
P^{-1}=\left( 
\begin{array}{cc}
\frac{b_{0}d_{1}+c_{1}d_{0}}{\lambda _{1}-\lambda _{2}} & \frac{\lambda
_{1}-\left( a_{1}d_{0}+b_{0}b_{1}\right) }{\lambda _{1}-\lambda _{2}} \\ 
-\frac{b_{0}d_{1}+c_{1}d_{0}}{\lambda _{1}-\lambda _{2}} & -\frac{\lambda
_{2}-\left( a_{1}d_{0}+b_{0}b_{1}\right) }{\lambda _{1}-\lambda _{2}}%
\end{array}%
\right)
\end{equation*}%
and 
\begin{equation*}
\Lambda =\left( 
\begin{array}{cc}
\lambda _{1} & 0 \\ 
0 & \lambda _{2}%
\end{array}%
\right) .
\end{equation*}%
Therefore we may write system (\ref{sysu2n1})\ as the following 
\begin{equation}
Z_{2n+2}=P\Lambda P^{-1}Z_{2n},  \label{Z2n+2}
\end{equation}%
where%
\begin{equation*}
Z_{2n}=\left( 
\begin{array}{c}
u_{2n} \\ 
v_{2n}%
\end{array}%
\right) ,
\end{equation*}%
for all $n\in \mathbb{N}_{0}$. From (\ref{Z2n+2}) we have%
\begin{equation*}
P^{-1}Z_{2n+2}=\Lambda P^{-1}Z_{2n}
\end{equation*}%
from which it follows that%
\begin{equation}
P^{-1}Z_{2n}=\Lambda ^{n}P^{-1}Z_{0}  \label{Z2n}
\end{equation}%
for all $n\in \mathbb{N}_{0}$. Multiplying both sides of (\ref{Z2n}) by the
matrix $P$, we have%
\begin{equation}
Z_{2n}=P\Lambda ^{n}P^{-1}Z_{0}  \label{Z2na}
\end{equation}%
or after some computations%
\begin{equation*}
\left( 
\begin{array}{c}
u_{2n} \\ 
v_{2n}%
\end{array}%
\right) =\left( 
\begin{array}{c}
C_{1}\lambda _{1}^{n}-C_{2}\lambda _{2}^{n} \\ 
C_{3}\lambda _{1}^{n}-C_{4}\lambda _{2}^{n}%
\end{array}%
\right) ,
\end{equation*}%
where%
\begin{eqnarray}
C_{1} &=&\frac{a_{0}b_{1}+a_{1}c_{0}}{\lambda _{1}-\lambda _{2}}\left( \frac{%
b_{0}d_{1}+c_{1}d_{0}}{\lambda _{1}-\left( a_{1}d_{0}+b_{0}b_{1}\right) }%
u_{0}+v_{0}\right) ,  \label{c1} \\
C_{2} &=&\frac{a_{0}b_{1}+a_{1}c_{0}}{\lambda _{1}-\lambda _{2}}\left( \frac{%
b_{0}d_{1}+c_{1}d_{0}}{\lambda _{2}-\left( a_{1}d_{0}+b_{0}b_{1}\right) }%
u_{0}+v_{0}\right) ,  \label{c2} \\
C_{3} &=&\frac{b_{0}d_{1}+c_{1}d_{0}}{\lambda _{1}-\lambda _{2}}u_{0}+\frac{%
\lambda _{1}-\left( a_{1}d_{0}+b_{0}b_{1}\right) }{\lambda _{1}-\lambda _{2}}%
v_{0},  \label{c3} \\
C_{4} &=&\frac{b_{0}d_{1}+c_{1}d_{0}}{\lambda _{1}-\lambda _{2}}u_{0}+\frac{%
\lambda _{2}-\left( a_{1}d_{0}+b_{0}b_{1}\right) }{\lambda _{1}-\lambda _{2}}%
v_{0},  \label{c4}
\end{eqnarray}%
for all $n\in \mathbb{N}_{0}$. From the last vectorial equality, we have%
\begin{eqnarray}
u_{2n} &=&C_{1}\lambda _{1}^{n}-C_{2}\lambda _{2}^{n}  \label{u2n} \\
v_{2n} &=&C_{3}\lambda _{1}^{n}-C_{4}\lambda _{2}^{n}  \label{v2n}
\end{eqnarray}%
for all $n\in \mathbb{N}_{0}$. From (\ref{odd}), (\ref{u2n}) and (\ref{v2n}%
), we have the formulas%
\begin{equation}
u_{2n+1}=\left( b_{0}C_{1}+a_{0}C_{3}\right) \lambda _{1}^{n}-\left(
b_{0}C_{2}+a_{0}C_{4}\right) \lambda _{2}^{n}  \label{u2n+1}
\end{equation}%
and%
\begin{equation}
v_{2n+1}=\left( d_{0}C_{1}+c_{0}C_{3}\right) \lambda _{1}^{n}-\left(
d_{0}C_{2}+c_{0}C_{4}\right) \lambda _{2}^{n}  \label{v2n+1}
\end{equation}%
for all $n\in \mathbb{N}_{0}$. Also, we can write the formulas (\ref{px2n})-(%
\ref{py2n+1}) as the following:%
\begin{eqnarray}
x_{2n} &=&x_{0}\prod\limits_{k=1}^{n}\frac{\frac{u_{2k}}{v_{2k-2}}}{\left(
b_{0}\frac{u_{2k-2}}{v_{2k-2}}+a_{0}\right) \left( d_{0}\frac{u_{2k-2}}{%
v_{2k-2}}+c_{0}\right) },  \label{x2n} \\
x_{2n+1} &=&x_{1}\prod\limits_{k=1}^{n}\frac{\left( b_{0}\frac{u_{2k}}{v_{2k}%
}+a_{0}\right) \left( d_{0}\frac{u_{2k-2}}{v_{2k-2}}+c_{0}\right) }{\frac{%
u_{2k}}{v_{2k-2}}},  \label{x2n+1} \\
y_{2n} &=&y_{0}\prod\limits_{k=1}^{n}\frac{\frac{v_{2k}}{u_{2k-2}}}{\left(
d_{0}+c_{0}\frac{v_{2k-2}}{u_{2k-2}}\right) \left( b_{0}+a_{0}\frac{v_{2k-2}%
}{u_{2k-2}}\right) },  \label{y2n} \\
y_{2n+1} &=&y_{1}\prod\limits_{k=1}^{n}\frac{\left( d_{0}+c_{0}\frac{v_{2k}}{%
u_{2k}}\right) \left( b_{0}+a_{0}\frac{v_{2k-2}}{u_{2k-2}}\right) }{\frac{%
v_{2k}}{u_{2k-2}}}  \label{y2n+1}
\end{eqnarray}%
for all $n\in \mathbb{N}_{0}$. Finally, by employing (\ref{u2n})-(\ref{v2n+1}%
) in (\ref{x2n})-(\ref{y2n+1}), we have the general solution of (\ref{sys})
as the following%
\begin{eqnarray*}
x_{2n} &=&x_{0}\prod\limits_{k=1}^{n}\frac{\frac{C_{1}\lambda
_{1}^{k}-C_{2}\lambda _{2}^{k}}{C_{3}\lambda _{1}^{k-1}-C_{4}\lambda
_{2}^{k-1}}}{\left( b_{0}\frac{C_{1}\lambda _{1}^{k-1}-C_{2}\lambda
_{2}^{k-1}}{C_{3}\lambda _{1}^{k-1}-C_{4}\lambda _{2}^{k-1}}+a_{0}\right)
\left( d_{0}\frac{C_{1}\lambda _{1}^{k-1}-C_{2}\lambda _{2}^{k-1}}{%
C_{3}\lambda _{1}^{k-1}-C_{4}\lambda _{2}^{k-1}}+c_{0}\right) } \\
x_{2n+1} &=&x_{1}\prod\limits_{k=1}^{n}\frac{\left( b_{0}\frac{C_{1}\lambda
_{1}^{k}-C_{2}\lambda _{2}^{k}}{C_{3}\lambda _{1}^{k}-C_{4}\lambda _{2}^{k}}%
+a_{0}\right) \left( d_{0}\frac{C_{1}\lambda _{1}^{k-1}-C_{2}\lambda
_{2}^{k-1}}{C_{3}\lambda _{1}^{k-1}-C_{4}\lambda _{2}^{k-1}}+c_{0}\right) }{%
\frac{C_{1}\lambda _{1}^{k}-C_{2}\lambda _{2}^{k}}{C_{3}\lambda
_{1}^{k-1}-C_{4}\lambda _{2}^{k-1}}} \\
y_{2n} &=&y_{0}\prod\limits_{k=1}^{n}\frac{\frac{C_{3}\lambda
_{1}^{k}-C_{4}\lambda _{2}^{k}}{C_{1}\lambda _{1}^{k-1}-C_{2}\lambda
_{2}^{k-1}}}{\left( d_{0}+c_{0}\frac{C_{3}\lambda _{1}^{k-1}-C_{4}\lambda
_{2}^{k-1}}{C_{1}\lambda _{1}^{k-1}-C_{2}\lambda _{2}^{k-1}}\right) \left(
b_{0}+a_{0}\frac{C_{3}\lambda _{1}^{k-1}-C_{4}\lambda _{2}^{k-1}}{%
C_{1}\lambda _{1}^{k-1}-C_{2}\lambda _{2}^{k-1}}\right) } \\
y_{2n+1} &=&y_{1}\prod\limits_{k=1}^{n}\frac{\left( d_{0}+c_{0}\frac{%
C_{3}\lambda _{1}^{k}-C_{4}\lambda _{2}^{k}}{C_{1}\lambda
_{1}^{k}-C_{2}\lambda _{2}^{k}}\right) \left( b_{0}+a_{0}\frac{C_{3}\lambda
_{1}^{k-1}-C_{4}\lambda _{2}^{k-1}}{C_{1}\lambda _{1}^{k-1}-C_{2}\lambda
_{2}^{k-1}}\right) }{\frac{C_{3}\lambda _{1}^{k}-C_{4}\lambda _{2}^{k}}{%
C_{1}\lambda _{1}^{k-1}-C_{2}\lambda _{2}^{k-1}}}
\end{eqnarray*}%
for all $n\in \mathbb{N}_{0}$.
\end{proof}

The following theorem determines and characterizes the long-term behavior of
positive solutions of (\ref{sys}) according to the parameters in the case $%
rank\left( A\right) =1$.

\begin{theorem}
Assume that $x_{0}$, $y_{0}>0$ and $\left( a_{n}\right) _{n\geq 0}$, $\left(
b_{n}\right) _{n\geq 0}$, $\left( c_{n}\right) _{n\geq 0}$, $\left(
d_{n}\right) _{n\geq 0}$ are two-periodic sequences of positive real
numbers. If%
\begin{equation*}
rank\left( 
\begin{array}{cc}
a_{1}d_{0}+b_{0}b_{1} & a_{0}b_{1}+a_{1}c_{0} \\ 
b_{0}d_{1}+c_{1}d_{0} & a_{0}d_{1}+c_{0}c_{1}%
\end{array}%
\right) =1,
\end{equation*}%
then, for the solutions of system (\ref{sys}) the followings are true:

(i) If $\frac{K\left( a_{1}d_{0}+b_{0}b_{1}+K\left(
a_{0}b_{1}+a_{1}c_{0}\right) \right) }{\left( b_{0}+Ka_{0}\right) \left(
d_{0}+Kc_{0}\right) }<1$, then $x_{2n}\rightarrow 0$, $x_{2n+1}\rightarrow
\infty $, $y_{2n}\rightarrow 0$, $x_{2n+1}\rightarrow \infty $ as $%
n\rightarrow \infty $.

(ii) If $\frac{K\left( a_{1}d_{0}+b_{0}b_{1}+K\left(
a_{0}b_{1}+a_{1}c_{0}\right) \right) }{\left( b_{0}+Ka_{0}\right) \left(
d_{0}+Kc_{0}\right) }>1$, then $x_{2n}\rightarrow \infty $, $%
x_{2n+1}\rightarrow 0$, $y_{2n}\rightarrow \infty $, $x_{2n+1}\rightarrow 0$
as $n\rightarrow \infty $.

(iii) If $\frac{K\left( a_{1}d_{0}+b_{0}b_{1}+K\left(
a_{0}b_{1}+a_{1}c_{0}\right) \right) }{\left( b_{0}+Ka_{0}\right) \left(
d_{0}+Kc_{0}\right) }=1$, then every solution of (\ref{sys}) is
two-periodic, where $K$ is given by (\ref{k}).
\end{theorem}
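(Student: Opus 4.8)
The plan is to deduce the whole statement from the explicit solution already produced, in the case $rank(A)=1$, in the proof of the previous theorem. Write
\[
L:=\frac{K\left( a_{1}d_{0}+b_{0}b_{1}+K\left( a_{0}b_{1}+a_{1}c_{0}\right) \right) }{\left( b_{0}+Ka_{0}\right) \left( d_{0}+Kc_{0}\right) },
\]
with $K$ as in (\ref{k}). Since all the coefficients $a_{i},b_{i},c_{i},d_{i}$ and the initial values $x_{0},y_{0}$ are positive, and hence (by the induction opening the proof of the preceding theorem) $x_{1},y_{1}$ are positive too, the number $L$ is a well-defined positive real. With this abbreviation, formulas (\ref{1x2n})--(\ref{1y2n+1}) read exactly
\[
x_{2n}=x_{0}L^{n},\qquad y_{2n}=y_{0}L^{n},\qquad x_{2n+1}=x_{1}L^{-n},\qquad y_{2n+1}=y_{1}L^{-n}\qquad (n\in\mathbb{N}_{0}),
\]
so the even- and odd-indexed subsequences of $(x_{n})$ and $(y_{n})$ are geometric progressions with common ratios $L,\,L,\,L^{-1},\,L^{-1}$ respectively. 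The three cases in the hypothesis --- $L<1$, $L>1$, $L=1$ --- are clearly exhaustive and mutually exclusive.

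First I would settle (i) and (ii) directly from the elementary limits of geometric sequences. If $L<1$, then $L^{n}\to 0$ and $L^{-n}\to\infty$ as $n\to\infty$, whence $x_{2n}=x_{0}L^{n}\to 0$ and $y_{2n}=y_{0}L^{n}\to 0$, while $x_{2n+1}=x_{1}L^{-n}\to\infty$ and $y_{2n+1}=y_{1}L^{-n}\to\infty$; this is (i). The case $L>1$ is the mirror image: $L^{n}\to\infty$ and $L^{-n}\to 0$, so $x_{2n},y_{2n}\to\infty$ and $x_{2n+1},y_{2n+1}\to 0$, which is (ii).

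Finally, for (iii), if $L=1$ the four displayed formulas collapse to $x_{2n}=x_{0}$, $x_{2n+1}=x_{1}$, $y_{2n}=y_{0}$, $y_{2n+1}=y_{1}$ for every $n$, so that $\left( x_{n+2},y_{n+2}\right) =\left( x_{n},y_{n}\right) $ for all $n\in\mathbb{N}_{0}$; by the Definition, every solution of (\ref{sys}) is then periodic with period $2$. I do not expect any real obstacle here: once the closed form is invoked the argument is immediate. The only point worth a sentence of justification is that the telescoping products in (\ref{1x2n})--(\ref{1y2n+1}) really do reduce to a single power of $L$ --- equivalently, that $x_{2n+2}/x_{2n}=y_{2n+2}/y_{2n}=L$ for all $n$ --- and this is exactly what comes out of (\ref{odd}) combined with the relations $v_{2n}=Ku_{2n}$ and $u_{2n+2}=\left( a_{1}d_{0}+b_{0}b_{1}+K\left( a_{0}b_{1}+a_{1}c_{0}\right) \right) u_{2n}$ established in the $rank(A)=1$ analysis, since all the factors $u_{2k},v_{2k}$ then cancel pairwise.
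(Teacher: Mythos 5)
Your proposal is correct and takes essentially the same approach as the paper: the paper's proof likewise consists of reading off the limits (and, when the ratio equals $1$, the two-periodicity) directly from the closed-form geometric formulas (\ref{1x2n})--(\ref{1y2n+1}) of the rank-one case. The only caveat --- which you inherit from the preceding theorem rather than introduce yourself --- is that the $k=1$ factor of those telescoping products tacitly uses $v_{0}=Ku_{0}$, whereas the rank-one analysis only establishes $v_{2n}=Ku_{2n}$ for $n\geq 1$; this does not affect (i)--(ii), but in (iii) it would in general give an \emph{eventually} two-periodic solution (periodic from the index-$2$ terms onward) rather than one that is two-periodic from $n=0$.
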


\begin{proof}
The proof follows directly from formulas (\ref{1x2n})-(\ref{1y2n+1}). That
is to say, it is clearly seen from these formulas that if%
\begin{equation*}
\frac{K\left( a_{1}d_{0}+b_{0}b_{1}+K\left( a_{0}b_{1}+a_{1}c_{0}\right)
\right) }{\left( b_{0}+Ka_{0}\right) \left( d_{0}+Kc_{0}\right) }<1,
\end{equation*}%
then $x_{2n}\rightarrow 0$, $x_{2n+1}\rightarrow \infty $, $%
y_{2n}\rightarrow 0$, $x_{2n+1}\rightarrow \infty $ as $n\rightarrow \infty $%
. If%
\begin{equation*}
\frac{K\left( a_{1}d_{0}+b_{0}b_{1}+K\left( a_{0}b_{1}+a_{1}c_{0}\right)
\right) }{\left( b_{0}+Ka_{0}\right) \left( d_{0}+Kc_{0}\right) }>1,
\end{equation*}%
then $x_{2n}\rightarrow \infty $, $x_{2n+1}\rightarrow 0$, $%
y_{2n}\rightarrow \infty $, $x_{2n+1}\rightarrow 0$ as $n\rightarrow \infty $%
. If%
\begin{equation*}
\frac{K\left( a_{1}d_{0}+b_{0}b_{1}+K\left( a_{0}b_{1}+a_{1}c_{0}\right)
\right) }{\left( b_{0}+Ka_{0}\right) \left( d_{0}+Kc_{0}\right) }=1,
\end{equation*}%
then every solution of (\ref{sys}) is two-periodic such that $x_{2n}=x_{0}$, 
$x_{2n+1}=x_{1}$, $y_{2n}=y_{0}$, $y_{2n+1}=y_{1}$.
\end{proof}

The following theorem determines and characterizes the long-term behavior of
positive solutions of (\ref{sys}) according to the parameters in the case $%
rank\left( A\right) =2$.

\begin{theorem}
Assume that $x_{0}$, $y_{0}>0$ and $\left( a_{n}\right) _{n\geq 0}$, $\left(
b_{n}\right) _{n\geq 0}$, $\left( c_{n}\right) _{n\geq 0}$, $\left(
d_{n}\right) _{n\geq 0}$ are two-periodic sequences of positive real
numbers. If%
\begin{equation*}
rank\left( 
\begin{array}{cc}
a_{1}d_{0}+b_{0}b_{1} & a_{0}b_{1}+a_{1}c_{0} \\ 
b_{0}d_{1}+c_{1}d_{0} & a_{0}d_{1}+c_{0}c_{1}%
\end{array}%
\right) =2,
\end{equation*}%
then, for the solutions of system (\ref{sys}) the followings are true:

(i) If $\frac{C_{1}\lambda _{1}}{C_{3}}-\left( b_{0}\frac{C_{1}}{C_{3}}%
+a_{0}\right) \left( d_{0}\frac{C_{1}}{C_{3}}+c_{0}\right) <0$, then $%
x_{2n}\rightarrow 0$, $x_{2n+1}\rightarrow \infty $, $y_{2n}\rightarrow 0$, $%
x_{2n+1}\rightarrow \infty $ as $n\rightarrow \infty $.

(ii) If $\frac{C_{1}\lambda _{1}}{C_{3}}-\left( b_{0}\frac{C_{1}}{C_{3}}%
+a_{0}\right) \left( d_{0}\frac{C_{1}}{C_{3}}+c_{0}\right) >0$, then $%
x_{2n}\rightarrow \infty $, $x_{2n+1}\rightarrow 0$, $y_{2n}\rightarrow
\infty $, $x_{2n+1}\rightarrow 0$ as $n\rightarrow \infty $.

(iii) If $\frac{C_{1}\lambda _{1}}{C_{3}}-\left( b_{0}\frac{C_{1}}{C_{3}}%
+a_{0}\right) \left( d_{0}\frac{C_{1}}{C_{3}}+c_{0}\right) =0$, then every
solution of (\ref{sys}) converges to a two-periodic positive solution of the
system, where $\lambda _{1}$ is given by (\ref{r1}) and $C_{1}$, $C_{3}$ are
given by (\ref{c1}) and (\ref{c3})
\end{theorem}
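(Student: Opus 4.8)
The plan is to read the long-term behaviour straight off the closed forms obtained in the proof of the solvability theorem, namely the product representations (\ref{x2n})--(\ref{y2n+1}) fed with the formulas (\ref{u2n})--(\ref{v2n+1}), and to reduce everything to one scalar ratio. First I would collect the needed spectral facts about $A=A_0A_1$. Writing $D$ for the expression under the radicals in (\ref{r1})--(\ref{r2}), i.e. $D=\big(a_1d_0+b_0b_1-a_0d_1-c_0c_1\big)^2+4(a_0b_1+a_1c_0)(b_0d_1+c_1d_0)$, we have $D>0$, so $\lambda_1,\lambda_2$ are real and distinct with $\lambda_1>\lambda_2$ and $\lambda_1-\lambda_2=\sqrt D$; moreover $\lambda_1+\lambda_2=a_1d_0+b_0b_1+a_0d_1+c_0c_1>0$, so $2\lambda_1=\sqrt D+(\lambda_1+\lambda_2)>0$ and $2(-\lambda_2)=\sqrt D-(\lambda_1+\lambda_2)<\sqrt D+(\lambda_1+\lambda_2)=2\lambda_1$, whence $\lambda_1>0$ and $\lambda_1>|\lambda_2|$ ($\lambda_1$ is the Perron eigenvalue of the positive matrix $A$). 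Since $\lambda_1$ is a root of $\lambda^2-(\operatorname{tr}A)\lambda+\det A=0$ we also get $\big(\lambda_1-(a_1d_0+b_0b_1)\big)\big(\lambda_1-(a_0d_1+c_0c_1)\big)=(a_0b_1+a_1c_0)(b_0d_1+c_1d_0)>0$ with the two factors summing to $2\lambda_1-\operatorname{tr}A=\sqrt D>0$, so both are positive; feeding $\lambda_1-(a_1d_0+b_0b_1)>0$ and $u_0=x_0>0$, $v_0=y_0>0$ into (\ref{c1}) and (\ref{c3}) gives $C_1>0$ and $C_3>0$.

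Next I would isolate the controlling limit. Set $\mu:=|\lambda_2|/\lambda_1<1$. From (\ref{u2n})--(\ref{v2n+1}) one gets, as $k\to\infty$, $u_{2k}=\lambda_1^{k}(C_1+O(\mu^{k}))$, $v_{2k}=\lambda_1^{k}(C_3+O(\mu^{k}))$, $u_{2k-1}=\lambda_1^{k-1}(b_0C_1+a_0C_3+O(\mu^{k}))$ and $v_{2k-1}=\lambda_1^{k-1}(d_0C_1+c_0C_3+O(\mu^{k}))$. Substituting these into the $k$-th factor of each of the four products (\ref{h1})--(\ref{h4}) (equivalently (\ref{x2n})--(\ref{y2n+1})) and using that $C_1C_3$, $b_0C_1+a_0C_3$ and $d_0C_1+c_0C_3$ are all positive, the $k$-th factors of the products for $x_{2n}$ and $y_{2n}$ equal $L+O(\mu^{k})$ while those for $x_{2n+1}$ and $y_{2n+1}$ equal $L^{-1}+O(\mu^{k})$, where
\[
L:=\frac{C_1C_3\lambda_1}{(b_0C_1+a_0C_3)(d_0C_1+c_0C_3)} .
\]
Finally $L-1=\dfrac{C_1C_3\lambda_1-(b_0C_1+a_0C_3)(d_0C_1+c_0C_3)}{(b_0C_1+a_0C_3)(d_0C_1+c_0C_3)}$, whose sign (divide numerator and denominator by $C_3^{2}>0$) is that of $\dfrac{C_1\lambda_1}{C_3}-\big(b_0\tfrac{C_1}{C_3}+a_0\big)\big(d_0\tfrac{C_1}{C_3}+c_0\big)$, so the hypotheses of (i), (ii), (iii) are exactly $L<1$, $L>1$, $L=1$.

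In case (i) ($L<1$) there are $K$ and $\rho\in(0,1)$ with every factor of the products for $x_{2n}$ and $y_{2n}$ at most $\rho$ for $k\ge K$, hence these partial products tend to $0$, i.e. $x_{2n}\to0$ and $y_{2n}\to0$; correspondingly every factor of the products for $x_{2n+1}$ and $y_{2n+1}$ is eventually $\ge\rho'>1$, so $x_{2n+1}\to\infty$ and $y_{2n+1}\to\infty$. Case (ii) ($L>1$) is the same with the two roles interchanged. In case (iii) ($L=1$) write the $k$-th factor of each of the four products as $1+\alpha_k$; by the previous step $\alpha_k=O(\mu^{k})$, so $\sum|\alpha_k|<\infty$, whence Theorem \ref{T} shows $\prod(1+|\alpha_k|)$ converges and therefore each of the four products converges, with nonzero limit; since every factor is strictly positive the limits are strictly positive, so $x_{2n}\to\xi_0$, $x_{2n+1}\to\xi_1$, $y_{2n}\to\eta_0$, $y_{2n+1}\to\eta_1$ with $\xi_0,\xi_1,\eta_0,\eta_1>0$. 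Letting $n\to\infty$ in the relations $x_{2n+1}=\frac{a_0}{x_{2n}}+\frac{b_0}{y_{2n}}$, $x_{2n+2}=\frac{a_1}{x_{2n+1}}+\frac{b_1}{y_{2n+1}}$, $y_{2n+1}=\frac{c_0}{x_{2n}}+\frac{d_0}{y_{2n}}$, $y_{2n+2}=\frac{c_1}{x_{2n+1}}+\frac{d_1}{y_{2n+1}}$ shows that the two-periodic sequence determined by $(x_0,y_0)=(\xi_0,\eta_0)$, $(x_1,y_1)=(\xi_1,\eta_1)$ is a positive periodic solution of (\ref{sys}), and $(x_n,y_n)$ converges to it.

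Everything outside the critical case is routine: the cases $L\ne1$ are merely the ratio test for infinite products, and the spectral preliminaries are elementary. The main obstacle is case (iii): there the ratio test is inconclusive, and the whole argument hinges on exploiting the spectral gap $|\lambda_2|<\lambda_1$ to obtain the geometric bound $\alpha_k=O(\mu^{k})$ on the deviation of each factor from $1$, which is what makes Theorem \ref{T} applicable and, crucially, forces the four products to converge to \emph{nonzero} limits; one then still has to verify, by passing to the limit in the system, that the resulting two-periodic limit is genuinely a solution of (\ref{sys}).
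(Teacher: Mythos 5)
Your proposal is correct and follows essentially the same route as the paper: read the asymptotics off the product representations \eqref{x2n}--\eqref{y2n+1} via the ratios $p_k,q_k,r_k,s_k$, identify the common limit $L$ of the factors, and invoke Theorem \ref{T}. Parts (i)--(ii) coincide with the paper's argument (factors eventually bounded away from $1$, hence the products diverge to $0$ or $\infty$). Where you genuinely improve on the paper is part (iii): the paper merely splits the series \eqref{s1} into a finite head $S_1(n_0)$ plus a ``tail'' $K_1(n_0)$ in which each term has already been replaced by its limit value, and concludes convergence because that replaced tail vanishes --- which is not a valid argument (terms tending to $0$ does not give a convergent series, and the substitution of limits into the tail is unjustified without an error estimate). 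Your geometric bound $\alpha_k=O(\mu^k)$ with $\mu=|\lambda_2|/\lambda_1<1$, derived from the spectral gap, is exactly the missing quantitative ingredient that makes the series absolutely convergent and forces the four products to converge to \emph{nonzero} limits. You also supply two details the paper leaves implicit: the positivity of $C_1$ and $C_3$ (needed so that $C_1/C_3$ is a legitimate positive limit and the denominators stay bounded away from zero), and the final verification, by passing to the limit in the recurrences, that the limiting two-periodic sequence is actually a solution of \eqref{sys}. One minor point: Theorem \ref{T} as stated only treats products with positive $\alpha_k$, so your step from $\sum|\alpha_k|<\infty$ to convergence of $\prod(1+\alpha_k)$ with signed $\alpha_k$ quietly uses the standard absolute-convergence criterion for infinite products; the paper is equally loose here, but it is worth flagging.
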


\begin{proof}
(i)-(ii) It is obvious from (\ref{r1})~and (\ref{r2}) that $\left\vert
\lambda _{2}\right\vert <\left\vert \lambda _{1}\right\vert $, since the
sequences $\left( a_{n}\right) _{n\geq 0}$, $\left( b_{n}\right) _{n\geq 0}$%
, $\left( c_{n}\right) _{n\geq 0}$, $\left( d_{n}\right) _{n\geq 0}$ are
positive. Let%
\begin{eqnarray*}
p_{k} &=&\frac{u_{2k}}{v_{2k-2}}=\frac{C_{1}\lambda _{1}^{k}-C_{2}\lambda
_{2}^{k}}{C_{3}\lambda _{1}^{k-1}-C_{4}\lambda _{2}^{k-1}}, \\
q_{k} &=&\frac{u_{2k}}{v_{2k}}=\frac{C_{1}\lambda _{1}^{k}-C_{2}\lambda
_{2}^{k}}{C_{3}\lambda _{1}^{k}-C_{4}\lambda _{2}^{k}}, \\
r_{k} &=&\frac{v_{2k}}{u_{2k-2}}=\frac{C_{3}\lambda _{1}^{k}-C_{4}\lambda
_{2}^{k}}{C_{1}\lambda _{1}^{k-1}-C_{2}\lambda _{2}^{k-1}}, \\
s_{k} &=&\frac{v_{2k}}{u_{2k}}=\frac{C_{3}\lambda _{1}^{k}-C_{4}\lambda
_{2}^{k}}{C_{1}\lambda _{1}^{k}-C_{2}\lambda _{2}^{k}}.
\end{eqnarray*}%
Then, we have 
\begin{eqnarray}
x_{2n} &=&x_{0}\prod\limits_{k=1}^{n}\frac{p_{k}}{\left(
b_{0}q_{k-1}+a_{0}\right) \left( d_{0}q_{k-1}+c_{0}\right) }%
=x_{0}\prod\limits_{k=1}^{n}\left( 1+\frac{p_{k}-\left(
b_{0}q_{k-1}+a_{0}\right) \left( d_{0}q_{k-1}+c_{0}\right) }{\left(
b_{0}q_{k-1}+a_{0}\right) \left( d_{0}q_{k-1}+c_{0}\right) }\right) ,
\label{t1} \\
x_{2n+1} &=&x_{1}\prod\limits_{k=1}^{n}\frac{\left( b_{0}q_{k}+a_{0}\right)
\left( d_{0}q_{k-1}+c_{0}\right) }{p_{k}}=\frac{x_{1}}{\prod%
\limits_{k=1}^{n}\left( 1+\frac{p_{k}-\left( b_{0}q_{k}+a_{0}\right) \left(
d_{0}q_{k-1}+c_{0}\right) }{\left( b_{0}q_{k}+a_{0}\right) \left(
d_{0}q_{k-1}+c_{0}\right) }\right) },  \label{t2} \\
y_{2n} &=&y_{0}\prod\limits_{k=1}^{n}\frac{r_{k}}{\left(
d_{0}+c_{0}s_{k-1}\right) \left( b_{0}+a_{0}s_{k-1}\right) }%
=y_{0}\prod\limits_{k=1}^{n}\left( 1+\frac{r_{k}-\left(
d_{0}+c_{0}s_{k-1}\right) \left( b_{0}+a_{0}s_{k-1}\right) }{\left(
d_{0}+c_{0}s_{k-1}\right) \left( b_{0}+a_{0}s_{k-1}\right) }\right) ,
\label{t3} \\
y_{2n+1} &=&y_{1}\prod\limits_{k=1}^{n}\frac{\left( d_{0}+c_{0}s_{k}\right)
\left( b_{0}+a_{0}s_{k-1}\right) }{r_{k}}=\frac{y_{1}}{\prod%
\limits_{k=1}^{n}\left( 1+\frac{r_{k}-\left( d_{0}+c_{0}s_{k}\right) \left(
b_{0}+a_{0}s_{k-1}\right) }{\left( d_{0}+c_{0}s_{k}\right) \left(
b_{0}+a_{0}s_{k-1}\right) }\right) }.  \label{t4}
\end{eqnarray}%
Since%
\begin{equation}
\lim_{k\rightarrow \infty }p_{k}=\frac{C_{1}\lambda _{1}}{C_{3}},\text{ }%
\lim_{k\rightarrow \infty }q_{k}=\frac{C_{1}}{C_{3}},\ \lim_{k\rightarrow
\infty }r_{k}=\frac{C_{3}\lambda _{1}}{C_{1}},\text{ }\lim_{k\rightarrow
\infty }s_{k}=\frac{C_{3}}{C_{1}},  \label{L}
\end{equation}%
we have the limits%
\begin{eqnarray*}
L_{1} &=&\lim_{k\rightarrow \infty }\frac{p_{k}-\left(
b_{0}q_{k-1}+a_{0}\right) \left( d_{0}q_{k-1}+c_{0}\right) }{\left(
b_{0}q_{k-1}+a_{0}\right) \left( d_{0}q_{k-1}+c_{0}\right) }=\frac{\frac{%
C_{1}\lambda _{1}}{C_{3}}-\left( b_{0}\frac{C_{1}}{C_{3}}+a_{0}\right)
\left( d_{0}\frac{C_{1}}{C_{3}}+c_{0}\right) }{\left( b_{0}\frac{C_{1}}{C_{3}%
}+a_{0}\right) \left( d_{0}\frac{C_{1}}{C_{3}}+c_{0}\right) }, \\
L_{2} &=&\lim_{k\rightarrow \infty }\frac{r_{k}-\left(
d_{0}+c_{0}s_{k-1}\right) \left( b_{0}+a_{0}s_{k-1}\right) }{\left(
d_{0}+c_{0}s_{k-1}\right) \left( b_{0}+a_{0}s_{k-1}\right) }=\frac{\frac{%
C_{3}\lambda _{1}}{C_{1}}-\left( b_{0}+a_{0}\frac{C_{3}}{C_{1}}\right)
\left( d_{0}+c_{0}\frac{C_{3}}{C_{1}}\right) }{\left( b_{0}+a_{0}\frac{C_{3}%
}{C_{1}}\right) \left( d_{0}+c_{0}\frac{C_{3}}{C_{1}}\right) },
\end{eqnarray*}%
where 
\begin{equation*}
\frac{C_{1}}{C_{3}}=\frac{a_{0}b_{1}+a_{1}c_{0}}{\lambda _{1}-\left(
a_{1}d_{0}+b_{0}b_{1}\right) }.
\end{equation*}%
Note that $L_{1}=L_{2}$. Hereby, convergence characters of the infinite
series%
\begin{equation}
\sum\limits_{k=0}^{\infty }\frac{p_{k}-\left( b_{0}q_{k-1}+a_{0}\right)
\left( d_{0}q_{k-1}+c_{0}\right) }{\left( b_{0}q_{k-1}+a_{0}\right) \left(
d_{0}q_{k-1}+c_{0}\right) }  \label{s1}
\end{equation}%
and%
\begin{equation}
\sum\limits_{k=0}^{\infty }\frac{r_{k}-\left( d_{0}+c_{0}s_{k-1}\right)
\left( b_{0}+a_{0}s_{k-1}\right) }{\left( d_{0}+c_{0}s_{k-1}\right) \left(
b_{0}+a_{0}s_{k-1}\right) }  \label{s2}
\end{equation}%
are same. We can say from a well-known fundamental result about infinite
series that (\ref{s1}) and (\ref{s2}) are divergent, if $\frac{C_{1}\lambda
_{1}}{C_{3}}-\left( b_{0}\frac{C_{1}}{C_{3}}+a_{0}\right) \left( d_{0}\frac{%
C_{1}}{C_{3}}+c_{0}\right) \neq 0$. So, the proofs of items (i)-(ii) follow
from these considerations and Theorem \ref{T}.

(iii) From (\ref{s1}), for sufficiently large $n_{0}$, we have%
\begin{equation*}
\sum\limits_{k=0}^{\infty }\frac{p_{k}-\left( b_{0}q_{k-1}+a_{0}\right)
\left( d_{0}q_{k-1}+c_{0}\right) }{\left( b_{0}q_{k-1}+a_{0}\right) \left(
d_{0}q_{k-1}+c_{0}\right) }=S_{1}\left( n_{0}\right) +K_{1}\left(
n_{0}\right)
\end{equation*}%
where%
\begin{equation*}
S_{1}\left( n_{0}\right) =\sum\limits_{k=0}^{n_{0}}\frac{p_{k}-\left(
b_{0}q_{k-1}+a_{0}\right) \left( d_{0}q_{k-1}+c_{0}\right) }{\left(
b_{0}q_{k-1}+a_{0}\right) \left( d_{0}q_{k-1}+c_{0}\right) }\text{ and }%
K_{1}\left( n_{0}\right) =\sum\limits_{k=n_{0}}^{\infty }\frac{\frac{%
C_{1}\lambda _{1}}{C_{3}}-\left( b_{0}\frac{C_{1}}{C_{3}}+a_{0}\right)
\left( d_{0}\frac{C_{1}}{C_{3}}+c_{0}\right) }{\left( b_{0}\frac{C_{1}}{C_{3}%
}+a_{0}\right) \left( d_{0}\frac{C_{1}}{C_{3}}+c_{0}\right) }.
\end{equation*}%
Note that if $\frac{C_{1}\lambda _{1}}{C_{3}}-\left( b_{0}\frac{C_{1}}{C_{3}}%
+a_{0}\right) \left( d_{0}\frac{C_{1}}{C_{3}}+c_{0}\right) =0$, then $%
K_{1}\left( n_{0}\right) \rightarrow 0$ as $n\rightarrow \infty $. That is
to say, (\ref{s1}) is convergent. Since $L_{1}=L_{2}$, (\ref{s2}) is
convergent, too. In this case, the proof of item (iii) follows from (\ref{t1}%
)-(\ref{t4}) and Theorem \ref{T}.
\end{proof}



\end{document}